\newtheorem{theorem}{Theorem}
\newtheorem{definition}{Definition}
\newtheorem{lemma}{Lemma}
\def\beq{ \begin{equation} }
\def\eeq{ \end{equation} }
\def\square{\vcenter{\vbox{\hrule height .4pt
  \hbox{\vrule width .4pt height 5pt \kern 5pt
        \vrule width .4pt} \hrule height .4pt}}}
\def\E{\mathbb{E}}
\def\P{\mathbb{P}}
\def\R{\mathbb{R}}
\def\Z{\mathbb{Z}}
\def\T{\mathcal{T}}
\title{An Adjacent-Swap Markov Chain on Coalescent Trees}
\author{Mackenzie Simper, Julia A. Palacios}
\date{}
\begin{document}

\maketitle

\begin{abstract}
The standard coalescent is widely used in evolutionary biology and population genetics to model the ancestral history of a sample of molecular sequences as a rooted and ranked binary tree. In this paper, we present a representation of the space of ranked trees as a space of constrained ordered matched pairs. We use this representation to define ergodic Markov chains on labeled and unlabeled ranked tree shapes analogously to transposition chains on the space of permutations. We show that an adjacent-swap chain on labeled and unlabeled ranked tree shapes has mixing time at least of order $n^3$, and at most of order $n^{4}$. Bayesian inference methods rely on 
Markov chain Monte Carlo methods on the space of trees. Thus, it is important to define good Markov chains which are easy to simulate and for which rates of convergence can be studied.
\end{abstract}

\section{Introduction}

The standard coalescent \citep{Kingman:1982uj} is often used in evolutionary biology and population genetics to model the set of ancestral relationships among $n$ individuals as a rooted and timed binary tree called a genealogy \citep{wakeley_coalescent_2008}. In this manuscript, we are concerned with  discrete tree topologies only and assume a unit interval length between consecutive coalescing (branching) events.  

A labeled \textbf{ranked tree shape} $T^{L} \in \mathcal{T}^{L}_{n}$ with $n$ leaves is a rooted labeled and ranked binary tree. The leaves are labeled by the set $\{\ell_{1},\ell_{2},\ldots,\ell_{n}\}$ and internal nodes are labeled  $1,\ldots,n-1$ in increasing order, with label $n-1$ at the root (Figure \ref{fig:fig1}(A)). The cardinality of $\mathcal{T}^{L}_n$ is given by
\begin{equation}
|\mathcal{T}^{L}_{n}|=\frac{n!(n-1)!}{2^{n-1}}.
\end{equation}


In many applications, the quantity of interest is the overall shape of the tree \citep{Kirkpatrick1993,Frost2013,Maliet2018}; the specific labels of the samples are un-important. In fact, a key feature of simple coalescent models, such as the Kingman coalescent, is the assumption that the leaves are \textit{exchangeable}. The Tajima coalescent, a lumping of the Kingman coalescent that ignores leaf labels, 
was recently proposed for inferring past population size trajectories from binary molecular data \citep{Palacios2019}. This motivates the study of \textbf{unlabeled ranked tree shapes} with $n$ leaves $\mathcal{T}_n$ (Figure \ref{fig:fig1}(B)).  The appealing feature of modeling unlabeled ranked tree shapes is the reduction in the cardinality of the state space. The cardinality of $\mathcal{T}_{n}$ is given by the ($n-1$)-th Euler zig-zag number $E_{n-1} \ll |T^{L}_{n}|$ \citep{kuznetsov1994increasing}.


\begin{figure}
\includegraphics[width=0.5\textwidth]{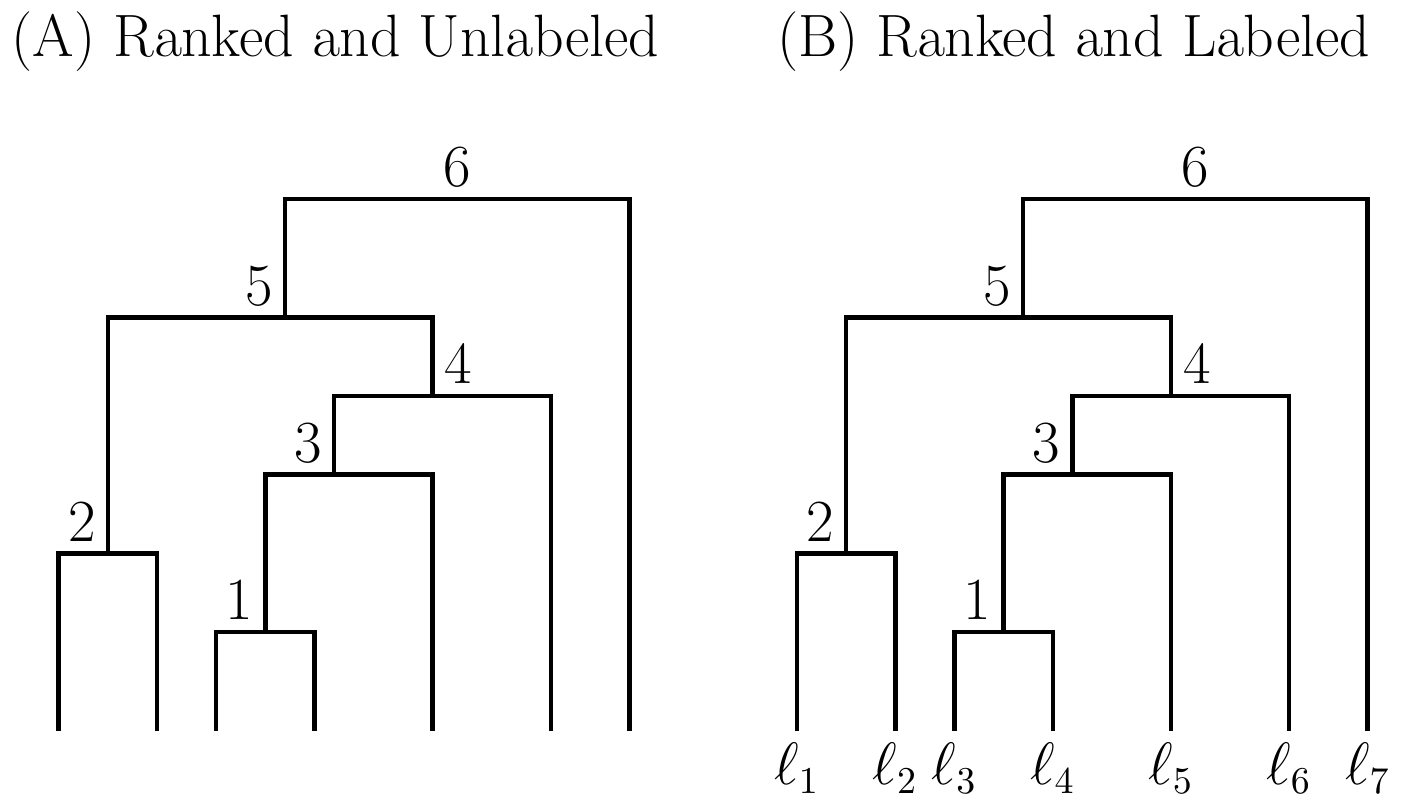}
\centering
\caption{\textbf{Examples of coalescent tree topologies}. A ranked and unlabeled tree shape $T$ has internal nodes labeled by their ranking from leaves to root and unlabeled leaves. (A) has ordered matched pairs: $(0,0)^{1},(0,0)^{2},(0,1)^{3},(0,3)^{4},(2,4)^{5},(0,5)^{6}$. A ranked and labeled tree $T^{L}$ has unique leaf and internal node labels. (B) has ordered matched pairs: $(\ell_{3},\ell_{4})^{1},(\ell_{1},\ell_{2})^{2},(\ell_{5},1)^{3},(\ell_{6},3)^{4},(2,4)^{5},(\ell_{7},5)^{6}$.}
\label{fig:fig1}
\end{figure}

Other types of trees are used in the context of evolutionary models, such as cladeograms (or phylogenies). Cladeograms are rooted or unrooted binary trees with labeled leaves and no ranking of internal nodes. In \cite{diaconis1998matchings}, 
the space of cladeograms with $l$ leaves is shown to be in bijection with the space of perfect matchings of $2n$ labels, with $n = l - 1$. The bijection with perfect matchings is used to define a Markov chain on the space of cladeograms. Using the perfect matching perspective, the chain is analogous to a \textit{random transpositions} chain on the space $S_{2n}$ of permutations of $2n$ elements. This representation allows the use of known results for the random transpositions chain to be applied to the chain on trees to determine sharp bounds on rates of convergence \cite{diaconis2002random}. Motivated by this result, we propose a new representation of labeled and unlabeled ranked tree shapes as a type of matching. We use this representation to define an adjacent-swap chain on the spaces of labeled and unlabeled ranked tree shapes, and to study their mixing time. It is important to study the mixing time for chains on ranked trees because these results have implications for the design of MCMC algorithms to infer evolutionary parameters from molecular data in a Bayesian setting \citep{drummond2012bayesian}.\\
%
%

In the rest of this section we define a bijective representation of ranked trees as constrained ordered matchings and define the adjacent-swap chain. We then state our main results concerning mixing times and discuss related work. 
In Section \ref{sec: prelim}, we state known results on Markov chains that will be used in the following sections. In section \ref{sec: chain} we state important properties of the proposed adjacent-swap chain. The lower bound for the mixing time of the adjacent-swap chains is proven in section \ref{sec: lowerBound} and it is obtained by finding a specific function that gives a lower bound for the relaxation time of the chains.
In section \ref{sec: coupling}, we prove the upper bounds on the mixing time of the adjacent-swap chains via a coupling argument. In the discussion section we mention future directions and other related chains on labeled and un-labeled coalescent trees which could be defined using the matching representations, as well as discuss implications for MCMC methods applied in practice.

\subsection{Matching representations}

We first establish some definitions. Let $m$ be an integer and $S$ a multiset of size $2m$. A  \textit{matching} of $S$ objects is a partition of the elements into $m$ disjoint subsets of size two. An \textit{ordered matching} is a matching with a linear ordering (ranking) of the $m$ pairs. 

\begin{definition}
 The space $COM_m(S)$ (Constrained Ordered Matchings) is the set of all ordered matchings of $S$ with pairs labeled $\textbf{p}_{1}, \textbf{p}_2, \dots, \textbf{p}_{m}$, which satisfy, for $k = 1, \dots, m$ and $a, b \in S$,
\begin{equation} \label{eqn: constraint}
\textbf{p}_k = (a, b)^{k} \implies a, b < k \text{ if } a,b \in \Z \text{, or } a,b \in \{\ell_{1},\ldots,\ell_{n}\}
\end{equation}
\end{definition}

To represent a labeled ranked tree shape as a constrained ordered matching, we take $S$ to be the set of leaf and internal node labels, 
and each pair in the matching represents a coalescence in the tree, with $\textbf{p}_{k}$ representing the $k$-th coalescence event. The condition \eqref{eqn: constraint} then simply ensures that no interior node can be merged before it has been introduced. A leaf can merge at any point and thus there is no constraint on a leaf's position in the matching.  

\begin{lemma}
With $S = \{\ell_{1},\ell_{2}, \dots, \ell_{n}, 1, 2, \dots, n - 2 \}$,
the space $COM_{n-1}(S)$ is in bijection with $\T_n^L$. With multiset $S = \{0, 0, \dots, 0, 1, 2, \dots, n - 2\}$, with $0$ repeated $n$ times, the space $COM_{n-1}(S)$ is in bijection with with $\T_n$.
\end{lemma}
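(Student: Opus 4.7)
The plan is to exhibit explicit inverse bijections in each case by reading off the coalescent events of a ranked tree in rank order. For the labeled case, given $T^L \in \mathcal{T}_n^L$, I would define the $k$-th pair $\mathbf{p}_k$ to be the unordered pair whose elements are the two children of internal node $k$ in $T^L$. Each such child is either a leaf label $\ell_i$ or an internal node label $j < k$, since any internal node of rank $j$ must sit strictly below node $k$ in the tree. The constraint (\ref{eqn: constraint}) is then immediate from this rank monotonicity, and a simple accounting shows that every element of $S = \{\ell_1, \ldots, \ell_n, 1, \ldots, n-2\}$ appears in exactly one pair: each leaf has a unique parent; each internal node $j \in \{1, \ldots, n-2\}$ has a unique parent $k > j$; and the root $n-1$ has no parent, which matches $n-1 \notin S$. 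So the map lands in $COM_{n-1}(S)$.

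For the inverse I would process pairs in rank order: given $(\mathbf{p}_1, \ldots, \mathbf{p}_{n-1}) \in COM_{n-1}(S)$, create internal node $k$ and declare its two children to be the elements of $\mathbf{p}_k$. The constraint guarantees that any internal node label appearing inside $\mathbf{p}_k$ has already been instantiated, so each step is well-defined, and the output is a rooted binary tree with leaves $\ell_1, \ldots, \ell_n$ and internal nodes ranked $1, \ldots, n-1$ in the correct order, i.e., an element of $\mathcal{T}_n^L$. Checking that the two maps compose to the identity in each direction is then a direct unwinding of the definitions.

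For the unlabeled case the same recipe works verbatim after replacing every leaf label $\ell_i$ by the single symbol $0$: the multiset $\{0, \ldots, 0, 1, \ldots, n-2\}$ carries $0$ with multiplicity $n$, reflecting the indistinguishability of leaves. The one step requiring care — and the only real obstacle — is verifying that this assignment remains a genuine bijection once leaves are identified. I would do this by checking that the composition of the labeled forward map with the leaf-forgetting map $COM_{n-1}(S_L) \to COM_{n-1}(S_U)$ factors through the quotient $\mathcal{T}_n^L \to \mathcal{T}_n$, and conversely that the inverse construction applied to an unlabeled matching reproduces a unique unlabeled tree (since the algorithm never needs to distinguish individual leaves in order to determine the shape). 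Both directions amount to bookkeeping on the multiset versus set structure, and no substantial new machinery is needed.
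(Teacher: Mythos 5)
Your proposal is correct, but it takes a different route from the paper. The paper's proof of the labeled case is essentially a counting argument: it computes $|COM_{n-1}(S)|=\prod_{i=2}^{n}\binom{i}{2}$ by observing that the $k$-th pair can be formed in $\binom{n-k+1}{2}$ ways, notes that this equals $|\T_n^L|$, and then invokes the natural correspondence between coalescence events and matched pairs to conclude bijectivity. You instead construct the bijection and its inverse explicitly (pair $\mathbf{p}_k$ = children of internal node $k$; inverse = build the tree by processing pairs in rank order), which is more self-contained since it does not rely on knowing the cardinality formula for $\T_n^L$, and it makes injectivity and surjectivity manifest rather than inferred from equal cardinalities. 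The one point you state a bit quickly is that the inverse construction yields a single connected tree rather than a forest; this follows because every element of $S$ has exactly one parent, only $n-1$ is parentless, and rank monotonicity rules out cycles -- worth a sentence, but not a gap. For the unlabeled case your argument (the forward map factors through the quotient $\T_n^L\to\T_n$, matching the leaf-forgetting surjection on matchings) is essentially the same as the paper's, which also identifies the surjection $COM_{n-1}(S_1)\to COM_{n-1}(S_2)$ with the surjection $\T_n^L\to\T_n$.
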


\begin{proof}
First assume that $S = \{\ell_{1},\ell_{2}, \dots, \ell_{n}, 1, 2, \dots, n - 2 \}$. The first matched pair can be formed by any two leaves in $\binom{n}{2}$ possible ways. The second pair can be formed by any of the $n-2$ remaining leaves or $1$, the label of the first matched pair, in $\binom{n-1}{2}$ ways; in general, the $k$-th matched pair can be formed in $\binom{n-k+1}{2}$ ways, because there are $n - k + 1$ unique labels that could be matched at that time. Thus, $|COM_{n-1}(S)|=\prod^{n}_{i=2}\binom{i}{2}=|\T^{L}_{n}|$ and for every element $T^{L} \in\mathcal{T}^{L}_{n}$ there is a one-to-one mapping between every coalescence event in $T^{L}$ and every matched pair in one element of $COM_{n-1}(S)$. These two facts imply there is a one-to-one mapping between $\mathcal{T}^{L}_{n}$ and $COM_{n-1}(S)$.  

With the leaf labels $\ell_1, \dots, \ell_n$ replaced with the repeated element $0$, the space of matchings is in bijection with $\T_n$ due to the fact that the space of $\T_n$ is equivalent to the space of $\T_n^L$ with the leaf labels removed. That is, there is a surjection $\T_n^L \to \T_n$ equivalent to the surjection $COM_{n-1}(S_1) \to COM_{n-1}(S_2)$, with $S_1 =  \{\ell_{1},\ell_{2}, \dots, \ell_{n}, 1, 2, \dots, n - 2 \}$ and $S_2 = \{0, 0, \dots, 0, 1, 2, \dots, n - 2 \}$.  The bijective matching representation of the ranked trees $\T_n$ and $\T_n^L$ of Figure \ref{fig:fig1} are shown in the legend of Figure \ref{fig:fig1}.
\end{proof}

\subsection{Markov chain and main results}

\begin{definition}
 The \textit{adjacent-swap Markov chain} on the space $COM_{n-1}(S)$
is defined by the following update move:
 \begin{enumerate}
     \item Pick an index $k \in \{1, \dots, n-2 \}$ uniformly at random.
     \item Pick labels $l_1$ and $l_2$ uniformly from the pairs $\mathbf{p}_{k}$ and $\mathbf{p}_{k+1}$ respectively.
     \begin{enumerate}
         \item If swapping the positions of $l_1, l_2$ does not violate constraint \eqref{eqn: constraint}, make the swap.
         \item Otherwise, remain in the current state. 
     \end{enumerate}
 \end{enumerate}
\end{definition}

For the sets $S$ which give the bijections with $\T_n$ and $\T_n^L$, this Markov chain is connected and reversible. In the case of $\T_n^L$ it is also symmetric and hence, it has a uniform stationary distribution. For $\T_n$, the chain is not symmetric; the stationary distribution is the \textit{Tajima distribution} on the space of ranked un-labeled trees. These facts, as well as the relationship between the two chains, will be proved in a later section.

\begin{figure}
\includegraphics[width=\textwidth]{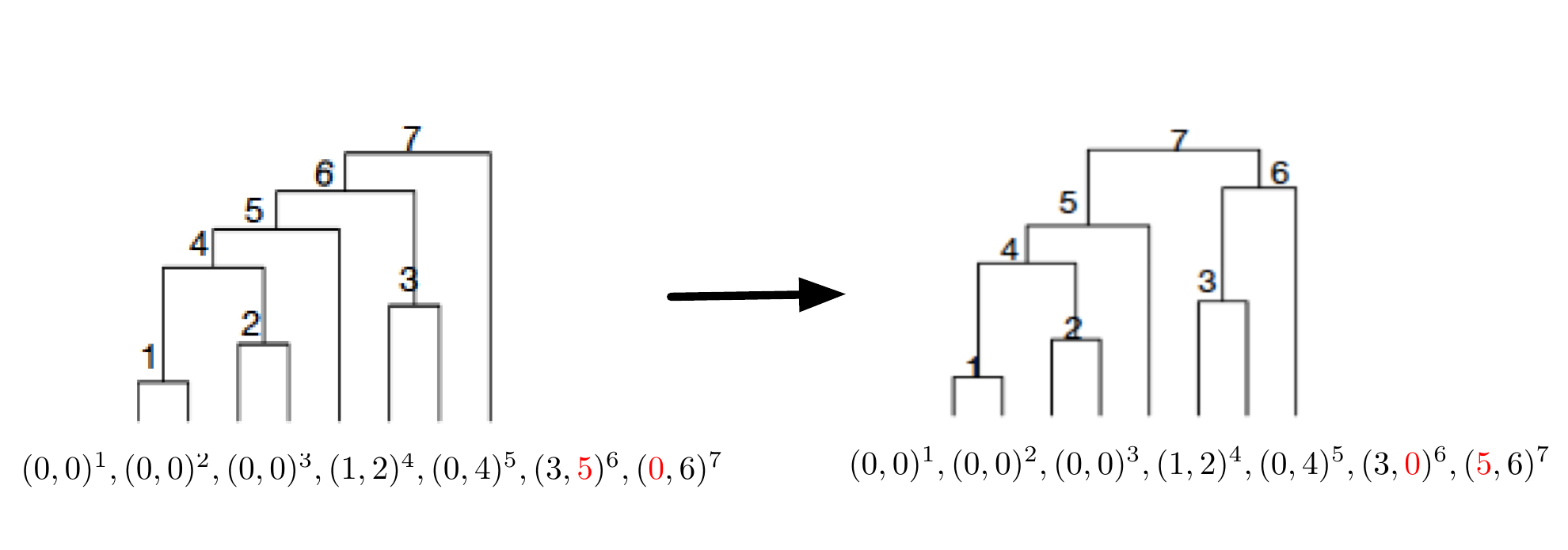}
\centering
\caption{\textbf{Example of an adjacent-swap move on ranked tree shapes}. The pair $k=6$ is selected uniformly at random among pairs $2,\ldots,6$. We then select uniformly at random one element from pair $6$ and one element from pair $7$. The only allowable swaps are $5$ and $0$ or $3$ and $0$. The new state (right tree) is obtained by swapping $5$ and $0$ from pairs $6$ and $7$ respectively in the left tree.}
\label{fig:fig2}
\end{figure}

We are interested in the convergence rates of this Markov chain and how this rate compares depending on the space $\mathcal{T}_n$ or $\mathcal{T}_n^L$. The measure that we study for convergence rate is the \textit{mixing time}. That is, for a Markov chain on a space $\Omega$ with transition probability $P$ and stationary distribution $\pi$, the mixing time is defined as
\[
t_{mix} = \sup_{x_0 \in \Omega} \inf \{ t > 0 : \| P^t(x_0, \cdot) - \pi(\cdot) \|_{TV} < 1/4 \}.
\]
To state the result, let $t_n$ and $t_n^L$ be the mixing times for the chains on $\T_n$ and $\T_n^L$ respectively. Our main result is:

\begin{theorem} \label{thm: main}
There exists constants $C_1, C_2, C_3$ such that
\[
C_1 \cdot n^3 \le t_n \le C_2 \cdot n^4,
\]
and
\[
C_1 \cdot n^3 \le t_n^L \le C_3 \cdot n^4.
\]
\end{theorem}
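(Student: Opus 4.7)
The plan is to prove the two sides of Theorem \ref{thm: main} by distinct methods: the lower bound via a spectral (relaxation-time) argument, and the upper bound via a Markovian coupling. Recalling the general inequality $t_{mix} \ge (t_{rel} - 1)\log 2$, a lower bound on the relaxation time $t_{rel} = 1/\mathrm{gap}$ suffices for the mixing-time lower bound, and by the variational characterization
\[
\mathrm{gap} = \inf_{f \neq \mathrm{const}} \frac{\mathcal{E}(f,f)}{\mathrm{Var}_\pi(f)},
\]
it is enough to exhibit a test function $f$ whose Dirichlet form is small compared to its stationary variance. For the upper bound, exhibiting a coupling $(X_t, Y_t)$ with expected coalescence time $O(n^4)$ implies $t_{mix} = O(n^4)$ by the standard coupling inequality.

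\textbf{Lower bound.} I would choose a test function $f \colon COM_{n-1}(S) \to \R$ that is Lipschitz under the generator, in the sense that $|f(x) - f(y)| \le 1$ whenever $P(x,y) > 0$. A natural choice is $f(T) = $ the index of the first pair in which some distinguished element appears: for $\T_n^L$, the pair containing $\ell_1$, and for $\T_n$, an analogous canonical statistic (for instance, the index of the first pair that contains an internal-node label rather than two zeros). Only moves at the two pairs adjacent to the current location of the distinguished element can change $f$, so $\sum_y P(x,y)(f(x) - f(y))^2 = O(1/n)$, hence $\mathcal{E}(f,f) = O(1/n)$. Under the uniform distribution on $\T_n^L$ the position of $\ell_1$ has distribution $P(K = k) = 2(n-k)/(n(n-1))$ (a direct sequential-construction computation), giving $\mathrm{Var}_\pi(f) = (n+1)(n-2)/18 = \Theta(n^2)$; an analogous computation handles the Tajima distribution on $\T_n$. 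Combining yields $\mathrm{gap} = O(1/n^3)$ and therefore $t_{mix} \ge C_1 n^3$ for both chains.

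\textbf{Upper bound.} I would construct a coupling that drives the two chains to agreement ``from the bottom up''. Assume inductively that $X_t$ and $Y_t$ already agree on the contents of pairs $\mathbf{p}_1, \ldots, \mathbf{p}_{k-1}$. Couple a single step so that (i) proposals at indices $\le k-2$ are performed identically in both chains, preserving agreement on the already-matched prefix, and (ii) proposals at the active level $k-1$ or higher are coupled so as to make progress toward matching pair $\mathbf{p}_k$. Since moves affecting a fixed level are proposed at rate $\Theta(1/n)$ and the relevant sub-process at that level resembles an interchange chain on $O(n)$ labels (which mixes in cubic time), the expected time to couple one additional level should be $O(n^3)$, and summing over the $n-1$ levels yields $O(n^4)$ for $\T_n^L$. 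For $\T_n$, the surjection $COM_{n-1}(S_1) \to COM_{n-1}(S_2)$ intertwines the dynamics, so the same coupling (or its natural projection) gives the same $O(n^4)$ bound.

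\textbf{Main obstacle.} The harder piece is the upper bound, specifically designing a coupling that is both a genuine coupling of the adjacent-swap chain in each marginal and compatible with the asymmetric constraint \eqref{eqn: constraint}: a proposed swap may be rejected in one marginal while accepted in the other, which breaks naive synchronous couplings and can undo agreement at lower levels. I expect this will require a careful case analysis of the accept/reject alternatives together with a Lyapunov-style potential (for example, a weighted count of disagreements whose level-$k$ disagreements carry more weight than level-$(k+1)$ ones) that decreases in expectation; a hitting-time bound for a birth-and-death comparison process would then give the $O(n^3)$-per-level estimate. On the lower-bound side the only nontrivial item is verifying $\mathrm{Var}_\pi(f) = \Omega(n^2)$ under the Tajima distribution, which should follow from the same recursive sequential description used above for the labeled case.
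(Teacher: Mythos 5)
Your high-level strategy coincides with the paper's (a Lipschitz test function in the variational characterization of the spectral gap for the lower bound; a coupling for the upper bound), and your labeled-case lower bound is sound: the pair index of $\ell_1$ indeed has law $P(K=k)=2(n-k)/(n(n-1))$ and variance $(n+1)(n-2)/18$, and since only the two proposal indices adjacent to $\ell_1$ can move it, $\mathcal{E}(f)\le 1/(2(n-2))$, giving a relaxation time of order $n^3$ for $\T_n^L$. (The paper instead uses the internal tree length, with variance $\Theta(n^3)$ against $\mathcal{E}\le 1/2$; both choices work.) However, there is a genuine gap in the unlabeled case. The lumping inequality goes the wrong way for you: the spectral gaps satisfy $\gamma^L\le\gamma$, so a lower bound on $\tau^L_{rel}$ does \emph{not} transfer to $\T_n$, and you need a test function defined on $\T_n$ itself. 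The one you propose --- the index of the first pair containing an internal-node label, i.e.\ the first non-cherry --- fails: the probability that the first $k$ pairs are all cherries is roughly $e^{-k^2/n}$, so that index concentrates at scale $\sqrt{n}$ and has variance only $\Theta(n)$, which against $\mathcal{E}=O(1/n)$ yields only $\tau_{rel}=\Omega(n^2)$. The fix is to pick a leaf-label-free statistic with $\Theta(n^2)$ variance (e.g.\ the pair index of internal node $1$, whose law is $2(n-k)/((n-1)(n-2))$, essentially your leaf statistic shifted) or the paper's internal tree length; either then bounds $\tau_{rel}$ for $\T_n$ and, via lumping, for $\T_n^L$ as well.

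The upper bound is a plan rather than a proof, and the plan as stated has a structural defect that you flag but do not resolve. Matching pairs bottom-up by index is fragile: even if pairs $\mathbf{p}_1,\dots,\mathbf{p}_{k-1}$ agree in both copies, a proposal at index $k-1$ exchanges an element of pair $k-1$ with an element of pair $k$, and since pair $k$ still differs between the copies, any marginal-preserving coupling of that step can import different elements into pair $k-1$ and destroy the agreement already built; your suggested remedy (a weighted disagreement potential with a supermartingale property) is not constructed. The paper sidesteps this by matching \emph{labels} rather than pairs, in decreasing label order $n-2,n-3,\dots,1$: a label that occupies the same pair in both copies is thereafter always moved identically (Property 1), so established matches are never undone, and the next label is driven together by a monotone coupling (Property 2) realized through explicit joint-proposal tables that handle the asymmetric rejection of constraint \eqref{eqn: constraint}; each label then matches within the hitting time of a reflected random walk, $\E[T_a]\le 2(n-2)(n-2-a)^2$, summing to $O(n^4)$. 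That case analysis is precisely the substance of the proof, so as written your upper bound --- and hence the $O(n^4)$ claim --- is not established.
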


The lower bound of $n^3$ comes from the \textit{relaxation time}, and involves a standard trick of finding a specific function for which the variance under the stationary distribution can be bounded. The chain on $\T_n$ is a certain type of ``lumping'' of the chain on $\T_n^L$, and so the same lower bound applies to both spaces. The upper bounds are obtained using a coupling argument. We conjecture that the mixing time $t_n$ is indeed smaller than the time $t_n^L$, though it is not evident whether this is by a significant order, or just a constant factor. 


\subsection{Related work}

Mixing of chains on other tree spaces have been previously studied, in particular, the mixing of Markov chains on the space of cladeograms.  An $n$-leaf \textit{cladeogram} is a rooted or unrooted tree with $n$ labeled leaves and unlabeled internal nodes of degree 3. The main difference between cladeograms and the tree topologies studied in this manuscript, is that cladeograms do not rank internal branching events. The cardinality of the space of cladeograms of $n$ leaves is $(2(n-1))!/2^{n-1}(n-1)!$, smaller than the cardinality of the space of ranked and labeled trees, but larger than the cardinality of the space of ranked unlabeled trees. Cladeograms are fundamental objects in phylogenetics to model ancestral relationships at the species level, while ranked tree shapes are fundamental objects in population genetics and phylodynamics of infectious diseases. Ranked tree shapes are used to model ancestral relationships of a sample of individuals from a single population of the same species \citep{felsenstein2004inferring}. \\

\noindent\textbf{Markov chains on cladeograms.} Aldous \cite{aldous2000mixing} studied a chain on unrooted cladeograms that removes a leaf chosen uniformly at random from the current cladeogram and reattaches it to a random edge. Using coupling methods, Aldous \cite{aldous2000mixing} showed the relaxation time for this chain is $\tau_{rel} \leq c_{2}n^{3}$. The same chain was later analyzed by Schweinsberg \citep{schweinsberg2002n2} who showed that $\tau_{rel} = O(n^{2})$ using a modified method of distinguished paths.
In \cite{diaconis1998matchings}, the authors show that the space of rooted cladeograms with $n$ leaves is in bijection with the space of perfect matchings in the complete graph on $2(n-1)$ vertices. This bijection was later used to define a  Markov chain by randomly choosing two pairs and transposing two randomly selected entries of each pair \cite{diaconis2002random}. The authors showed that this random transpositions chain mixed after $\frac{1}{2}n \log n$ steps. In \cite{SHK2014}, the authors study the nearest neighbor interchange (NNI) chain and a subtree prune and regraft (SPR) chain on the space of rooted cladeograms. The authors showed that the upper bounds on the relaxation time of the SPR and the NNI chains are $O(n^{5/2})$ and $O(n^{4})$ respectively. 

Other related work include the study of mixing times of chains whose  stationary distribution is the posterior distribution over cladeograms \citep{mossel2006limitations,vstefankovivc2011fast}. In 
\citep{mossel2006limitations}, the authors show exponential mixing times under a misspecified model in which data is generated from a mixture of cladeograms. 


\paragraph{Markov chains on related spaces.} The adjacent swap Markov chain on the space of ranked trees studied in this manuscript is closely related to the adjacent transpositions chain on the space $S_n$ of permutations. 
As it is often done, one can think of an element $\sigma \in S_n$ as an ordering of a deck of cards with unique labels $1, \dots, n$. In \cite[Example 4.10]{aldous1983random}, the chain is defined by picking at random two adjacent cards from the deck; with probability 1/2, the cards are swapped and with probability 1/2 nothing happens. 
The chain is symmetric with uniform stationary distribution. Aldous \citep{aldous1983random} showed that the mixing time of this chain has a lower bound of $c_{1}n^{3}$ and an upper bound of $c_{2}n^3 \log(n)$ on the mixing time.
In \cite{wilson2004mixing}, the now-famous ``Wilson's method'' was introduced and applied to the adjacent transpositions chain as an example to improve the lower bound to order $n^3 log(n)$ with the explicit constant $(1/\pi^2) n \log(n)$, as well as an upper bound of $(2/\pi^2) n \log(n)$. Finally, in \cite{lacoin2016mixing} ,the upper bound was improved to $(1/\pi^2) n \log(n)$, so that the upper and lower bounds matched; in addition, the chain was proven to follow the \textit{cut-off} phenomenon. 
Durrett generalized the adjacent transpositions chain  to the $L$-reversal chain, introduced as a model for the evolution of DNA in a chromosome \cite{durrett2003shuffling}. 


To the best of our knowledge, bounds on the mixing time of chains on  ranked tree shapes have not been studied before \citep{misra2011optimization}. 


\section{Preliminaries} \label{sec: prelim}

In this section, we review some results of Markov chains theory that will be used to bound the mixing time of the adjacent swap chains on $\T^{L}_{n}$ and $\T_{n}$. We use the coupling method to find the upper bounds. A \textit{coupling} of Markov chains with transition matrix $P$ is a process $(X_t, Y_t)_{t \ge 0}$ such that both $(X_t)$ and $(Y_t)$ are marginally Markov chains with transition matrix $P$. The goal is to define a coupling of two chains from different starting distributions such that the two chains will quickly reach the same state, and once this occurs the two chains stay matched forever. This \textit{coupling time} gives an upper bound on the mixing time, see Chapter 5 from \cite{LevinPeresWilmer2006} for more details.

\begin{theorem}[Theorem 5.4 \citep{LevinPeresWilmer2006}] \label{thm: couplingThm}
Suppose that for each pair of states $x,y \in \mathcal{X}$ there is a coupling $(X_{t},Y_{t})$ with $X_{0}=x$ and $Y_{0}=y$. For each such coupling, let $\tau_{\text{couple}}$ be the coalescence time of the chains, i.e. 
\[ \tau_{\text{couple}}:=\min \{t: X_{s}=Y_{s} \text{ for all } s \geq t\}.
\]
Then
\[t_{mix} \leq 4 \max_{x,y}E_{x,y}(\tau_{\text{couple}})\]
\end{theorem}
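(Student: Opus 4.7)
The plan is to combine the classical coupling inequality with Markov's inequality and a convexity-style averaging over the stationary distribution. First I would establish the coupling inequality: for any coupling $(X_t, Y_t)$ with $X_0 = x$ and $Y_0 = y$, the event $\{X_t \ne Y_t\}$ provides a valid joint support bound on the total variation between the marginals, so
\[
\|P^t(x, \cdot) - P^t(y, \cdot)\|_{TV} \le \P(X_t \ne Y_t).
\]
Because the coupling is designed so that once $X_s = Y_s$ the chains remain matched for all later $s$, the event $\{X_t \ne Y_t\}$ is contained in $\{\tau_{\text{couple}} > t\}$, which upgrades the bound to $\P(\tau_{\text{couple}} > t)$.

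Next I would convert the pairwise bound into a bound against the stationary distribution $\pi$. The key observation is that $\pi = \sum_y \pi(y) P^t(y, \cdot)$, and total variation is convex in its arguments, so
\[
\|P^t(x, \cdot) - \pi\|_{TV} \le \sum_y \pi(y) \|P^t(x, \cdot) - P^t(y, \cdot)\|_{TV} \le \max_{y} \|P^t(x, \cdot) - P^t(y, \cdot)\|_{TV}.
\]
Combining this with the coupling inequality and then applying Markov's inequality to $\tau_{\text{couple}}$ gives
\[
\|P^t(x, \cdot) - \pi\|_{TV} \le \max_{y} \P_{x,y}(\tau_{\text{couple}} > t) \le \frac{\max_{x,y} \E_{x,y}(\tau_{\text{couple}})}{t}.
\]

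Finally, I would take the supremum over starting states $x$ and choose $t = 4 \max_{x,y} \E_{x,y}(\tau_{\text{couple}})$, which forces the right-hand side to be at most $1/4$. By the definition of $t_{mix}$ as the first time the worst-case total variation drops below $1/4$, this immediately yields $t_{mix} \le 4 \max_{x,y} \E_{x,y}(\tau_{\text{couple}})$.

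There isn't really a deep obstacle here since each step is a short, standard manipulation; the only point that requires care is the reduction from the single-starting-state distance $\|P^t(x,\cdot) - \pi\|_{TV}$ to the pairwise distance $\|P^t(x,\cdot) - P^t(y,\cdot)\|_{TV}$, which crucially uses stationarity of $\pi$ together with convexity of total variation. Everything else (Markov's inequality, the absorption property of the coupling, and substituting the chosen $t$) is mechanical.
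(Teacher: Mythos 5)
Your proof is correct and is exactly the standard argument that the paper defers to by citing Theorem 5.4 of Levin--Peres--Wilmer (the paper gives no proof of its own): the coupling inequality, averaging against $\pi$ using stationarity and convexity of total variation, and Markov's inequality with $t = 4\max_{x,y}\E_{x,y}(\tau_{\text{couple}})$. The only hair worth splitting is that this choice of $t$ gives $\|P^t(x,\cdot)-\pi\|_{TV}\le 1/4$ rather than the strict inequality in the paper's definition of $t_{mix}$, which is harmless (e.g.\ since $\tau_{\text{couple}}$ is integer-valued one can sharpen Markov's inequality to make the bound strict).
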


%

To find a lower bound on the mixing time, we bound the relaxation time and use the following result:
\begin{theorem}[Theorem 12.5 \citep{LevinPeresWilmer2006}]
Suppose $P$ is the transition matrix of an irreducible, aperiodic and reversible Markov chain. Then 
\begin{equation}
t_{mix} \geq (\tau_{n}-1)\log(2)
\end{equation}
\end{theorem}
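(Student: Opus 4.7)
The plan is to use the spectral theorem applied to the transition operator $P$. Because $P$ is reversible with respect to $\pi$, the operator $P$ is self-adjoint on the Hilbert space $\ell^2(\pi)$, so $P$ admits a real orthonormal basis of eigenfunctions $f_1=\mathbf{1}, f_2, \ldots, f_N$ with real eigenvalues $1=\lambda_1 > \lambda_2 \geq \ldots \geq \lambda_N \geq -1$. Let $\lambda_\star$ denote the second-largest eigenvalue in absolute value, so that the relaxation time is $\tau_n = 1/(1-\lambda_\star)$.

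First I would select an eigenfunction $f$ with $Pf = \lambda_\star f$ and normalize so that $\|f\|_\infty = 1$. By orthogonality to the constant eigenfunction $\mathbf{1}$, we have $\pi(f) = 0$. Iterating gives $P^t f = \lambda_\star^t f$ pointwise, so for any starting state $x_0$,
\[
|\lambda_\star|^t |f(x_0)| \,=\, \left| (P^t f)(x_0) - \pi(f) \right| \,=\, \left| \sum_y \left( P^t(x_0, y) - \pi(y) \right) f(y) \right| \,\leq\, 2 \|P^t(x_0, \cdot) - \pi\|_{TV},
\]
where the last inequality is the standard dual characterization of total variation distance for functions with $\|f\|_\infty \leq 1$.

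Next I would choose $x_0$ so that $|f(x_0)| = 1$ (such $x_0$ exists because $\|f\|_\infty = 1$). Specializing to $t = t_{mix}$ bounds the right-hand side by $2 \cdot 1/4 = 1/2$, giving $|\lambda_\star|^{t_{mix}} \leq 1/2$, i.e.\ $t_{mix} \log(1/|\lambda_\star|) \geq \log 2$. To convert this spectral-gap bound into the desired relaxation-time bound, apply the elementary calculus inequality $\log(1/(1-x)) \leq x/(1-x)$ valid for $x \in [0,1)$, with $x = 1 - |\lambda_\star|$. Then $\log(1/|\lambda_\star|) \leq (1-|\lambda_\star|)/|\lambda_\star| = 1/(\tau_n - 1)$, and rearranging yields the claim $t_{mix} \geq (\tau_n - 1)\log 2$.

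There is no serious obstacle once the spectral theorem has been invoked; the argument is a textbook combination of self-adjointness, TV-duality, and one calculus inequality. The only minor subtlety is the case when $\lambda_\star$ is negative, which can happen for nearly-periodic chains; there $\lambda_\star^t$ alternates sign, but since only $|\lambda_\star|^t$ enters the bound the argument is unchanged. Aperiodicity is needed only to ensure $|\lambda_\star| < 1$ so that taking logarithms is legitimate.
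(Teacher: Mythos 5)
The paper does not prove this statement; it is quoted verbatim as Theorem 12.5 of Levin--Peres--Wilmer and used as a black box. Your argument is correct and is essentially the standard textbook proof of that theorem: self-adjointness of $P$ on $\ell^2(\pi)$, the bound $|\lambda_\star|^{t}\le 2\|P^t(x_0,\cdot)-\pi\|_{TV}$ at a maximizer of the eigenfunction, and the inequality $\log(1/|\lambda_\star|)\le (1-|\lambda_\star|)/|\lambda_\star|$ to pass from the spectral bound to the relaxation time.
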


The relaxation time is defined as the inverse of the spectral gap: $\tau_{n}=1/\gamma$, where $\gamma=1-|\lambda_{n,2}|$, one minus the absolute value of the second-largest eigenvalue of the transition matrix of the chain. We will bound  the relaxation time using the following variational characterization:

\begin{theorem}[Lemma 13.7 \citep{LevinPeresWilmer2006}] Let P be the transition matrix for a reversible Markov chain. The spectral gap $\gamma$ satisfies
\[
\gamma = \min _{f:\mathcal{X} \rightarrow \R, \text{Var}_{\pi}(f) \neq 0} \frac{\mathcal{E}(f)}{\text{Var}_{\pi}(f)}
\]
where
\[\mathcal{E}(f):=\frac{1}{2}\sum_{x,y \in \mathcal{X}}[f(x)-f(y)]^{2}\pi(x)P(x,y)
\]
\end{theorem}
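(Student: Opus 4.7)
The plan is standard $L^2(\pi)$ spectral theory: equip $\R^{\mathcal{X}}$ with the inner product $\langle f, g\rangle_\pi := \sum_{x} f(x) g(x) \pi(x)$, use reversibility to diagonalize $P$, and then recognize the Dirichlet form as a quadratic form in the eigenbasis coefficients. Viewing $P$ as the operator $(Pf)(x) = \sum_y P(x,y) f(y)$, the detailed balance condition $\pi(x) P(x,y) = \pi(y) P(y,x)$ is precisely the statement that $\langle Pf, g\rangle_\pi = \langle f, Pg\rangle_\pi$. By the spectral theorem for self-adjoint operators, $P$ therefore admits a real orthonormal basis $f_1, f_2, \dots, f_N$ with eigenvalues $1 = \lambda_1 \ge \lambda_2 \ge \cdots \ge \lambda_N \ge -1$, and one may take $f_1 \equiv 1$.

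The first substantive step is the identity $\mathcal{E}(f) = \langle (I - P) f, f\rangle_\pi$. I would obtain it by using $\sum_y P(x,y) = 1$ together with $\sum_x \pi(x) P(x,y) = \pi(y)$ (which follows from reversibility and stationarity) to symmetrize $\sum_x \pi(x) f(x)^2 = \tfrac{1}{2}\sum_{x,y} \pi(x) P(x,y)[f(x)^2 + f(y)^2]$, and then subtracting $\langle Pf, f\rangle_\pi = \sum_{x,y} \pi(x) P(x,y) f(x) f(y)$ to produce $\tfrac{1}{2}\sum_{x,y} \pi(x) P(x,y)[f(x)-f(y)]^2$, which is $\mathcal{E}(f)$.

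With this identity in hand, expand an arbitrary $f$ in the eigenbasis as $f = \sum_i a_i f_i$. Orthonormality yields $\E_\pi f = a_1$, $\text{Var}_\pi(f) = \sum_{i \ge 2} a_i^2$, and $\mathcal{E}(f) = \sum_{i} (1-\lambda_i) a_i^2 = \sum_{i \ge 2}(1 - \lambda_i) a_i^2$, the $i=1$ term vanishing. The Rayleigh quotient $\mathcal{E}(f)/\text{Var}_\pi(f)$ is thus a convex combination of the numbers $\{1 - \lambda_i : i \ge 2\}$, so it is bounded below by $1 - \lambda_2$, with equality attained by $f = f_2$, which satisfies $\text{Var}_\pi(f_2) = 1 \neq 0$.

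The one point I would flag carefully is the relationship between the minimum $1 - \lambda_2$ produced by the argument and the relaxation-time parameter $\gamma = 1 - |\lambda_{n,2}|$ used elsewhere in the paper: these two coincide precisely when the second-largest eigenvalue is non-negative, which is the case for the adjacent-swap chain under consideration, since its update rule leaves the state unchanged with positive probability at every step and the chain therefore has a non-negative spectrum. Beyond that, the proof is essentially a Rayleigh-quotient calculation on the orthogonal complement of the constants; the only real work is verifying self-adjointness of $P$ with respect to $\langle\cdot,\cdot\rangle_\pi$ and carrying out the symmetrization that produces the Dirichlet-form identity.
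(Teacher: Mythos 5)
Your argument is correct and is essentially the standard textbook proof of the cited result: the paper itself offers no proof of this statement, quoting it directly as Lemma 13.7 of Levin--Peres--Wilmer, and your route (self-adjointness of $P$ in $L^2(\pi)$, the identity $\mathcal{E}(f)=\langle (I-P)f,f\rangle_\pi$ via the symmetrization $\sum_x\pi(x)f(x)^2=\tfrac12\sum_{x,y}\pi(x)P(x,y)[f(x)^2+f(y)^2]$, and the Rayleigh quotient on the orthogonal complement of the constants) is exactly how that lemma is proved there. One caution on your closing remark: the variational minimum you obtain is $1-\lambda_2$, whereas the paper defines $\gamma=1-|\lambda_{n,2}|$, and your justification that these coincide --- ``the chain leaves the state unchanged with positive probability, therefore the spectrum is non-negative'' --- is not a valid inference; positive holding probability does not force $\lambda_N\ge 0$ (that requires, e.g., holding probability at least $1/2$ everywhere, which the non-lazy adjacent-swap chain does not obviously satisfy). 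The discrepancy is harmless for the paper's application, since $1-|\lambda_{n,2}|\le 1-\lambda_2$ always, so a lower bound on $\mathrm{Var}_\pi(f)/\mathcal{E}(f)$ still lower-bounds the relaxation time $1/(1-|\lambda_{n,2}|)$; but the claim as you stated it should either be dropped or justified differently.
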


\section{The adjacent-swap chain on ranked tree shapes} \label{sec: chain}
%

\begin{lemma}
The adjacent-swap Markov chain on $\T_n^L$ is irreducible, aperiodic and reversible with respect to the uniform stationary distribution $\pi^L(x) := 1/|\T_n^L|$.
\end{lemma}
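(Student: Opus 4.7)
The plan is to establish symmetry (from which both reversibility and the uniform stationary distribution follow), aperiodicity, and irreducibility separately. Irreducibility will be the principal difficulty.

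\textbf{Symmetry and reversibility.} A single step of the chain corresponds to picking one of $4(n-2)$ triples $(k, l_1, l_2)$ uniformly at random and either performing the swap or staying put. The key observation is that the proposed swap violates constraint~\eqref{eqn: constraint} if and only if the element $l_2$ pushed from $\mathbf{p}_{k+1}$ into $\mathbf{p}_k$ is the integer $k$ itself: any leaf is unconstrained, any integer in $\mathbf{p}_{k+1}$ is already $\leq k$ and so only the value $k$ becomes illegal in $\mathbf{p}_k$, and the displaced element $l_1 \in \mathbf{p}_k$ is automatically admissible in $\mathbf{p}_{k+1}$. Consequently, if a swap $x \to y$ is allowed then $l_1$ is either a leaf or an integer $< k$, in particular $l_1 \neq k$, so the reverse swap $y \to x$ is also allowed. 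Thus each off-diagonal transition has probability exactly $1/(4(n-2))$ in both directions, so $P$ is symmetric. A symmetric stochastic matrix is doubly stochastic, so $\pi^L \equiv 1/|\T_n^L|$ is stationary, and detailed balance is immediate.

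\textbf{Aperiodicity.} The node of rank $n-2$ must have a parent of strictly greater rank, and the only such rank is $n-1$, so node $n-2$ is always a child of the root. Equivalently, the integer $n-2$ always sits in pair $\mathbf{p}_{n-1}$ in every state. The triple $(k, l_1, l_2) = (n-2, \cdot, n-2)$ is therefore always present and always corresponds to a forbidden swap, contributing at least $1/(4(n-2)) > 0$ to $P(x, x)$ for every $x$. Hence the chain has period $1$.

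\textbf{Irreducibility.} I would proceed by induction on $n$, with base case $n = 3$ checked directly since $|\T_3^L| = 3$ and two of the three neighbors of any state are reachable in one step. Two ``free'' move types drive the argument: (i) a leaf can always be swapped with either element of an adjacent pair, since leaves are unconstrained; and (ii) pair $\mathbf{p}_1$ consists exclusively of leaves (an integer $\geq 1$ in $\mathbf{p}_1$ would violate~\eqref{eqn: constraint}), so bringing a leaf from $\mathbf{p}_2$ into $\mathbf{p}_1$ is always allowed. Using (i), I move $\ell_1$ downward one pair at a time into $\mathbf{p}_1$ via a sequence of legal swaps; I then do the same for $\ell_2$, at each step choosing the element of the pair below that is \emph{not} $\ell_1$ as the swap partner, so that $\ell_1$ is never displaced. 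This reaches a state with $\mathbf{p}_1 = (\ell_1, \ell_2)$. The remaining matching on $\mathbf{p}_2, \ldots, \mathbf{p}_{n-1}$ is in bijection with an element of $\T_{n-1}^L$ under the relabeling that treats the internal node $1$ as a new leaf and sends integer $j \mapsto j - 1$ for $j = 2, \ldots, n-2$; adjacent swaps on indices $k \geq 2$ of the original chain correspond exactly to moves of the adjacent-swap chain on $\T_{n-1}^L$, and the forbidden-move condition is respected under the bijection. The inductive hypothesis then connects any two states sharing $\mathbf{p}_1 = (\ell_1, \ell_2)$, and composing with the reduction step shows every pair of states in $\T_n^L$ is connected. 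The delicate point is the two-phase bubbling: one must verify that the two-element freedom in choosing the swap partner at each descent step is enough both to avoid the single forbidden swap and to avoid dislodging $\ell_1$ while moving $\ell_2$, which is precisely where the structure of pair $\mathbf{p}_1$ (always two leaves) is used.
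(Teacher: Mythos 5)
Your proof is correct. The symmetry and aperiodicity arguments coincide with the paper's: both derive symmetry from the fact that all labels in a state of $\T_n^L$ are distinct (so each allowed transition is realized by a unique triple and has probability $1/4(n-2)$ in each direction), and both obtain $P(x,x)>0$ from the always-rejected attempt to pull the integer $n-2$ out of pair $\mathbf{p}_{n-1}$. Your clean characterization of the forbidden moves --- a swap at index $k$ is illegal precisely when the element entering $\mathbf{p}_k$ is the integer $k$ itself --- is not stated explicitly in the paper but makes the reversibility of every allowed move immediate. Where you genuinely diverge is irreducibility: the paper disposes of it in one sentence (``any label can be moved right one pair at a time, and left subject to the constraint''), without constructing a path between two arbitrary states, whereas you give an explicit induction on $n$: bubble $\ell_1$ and then $\ell_2$ down into $\mathbf{p}_1$ (using that leaves are unconstrained and that $\mathbf{p}_1$ always consists of two leaves), then identify the residual matching on $\mathbf{p}_2,\dots,\mathbf{p}_{n-1}$ with an element of $\T_{n-1}^L$ in a way that intertwines the adjacent-swap dynamics and the forbidden-move condition. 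This costs you some bookkeeping (checking the two-phase descent never dislodges $\ell_1$ and that the index/label shift $j\mapsto j-1$ preserves legality), but it buys a complete, checkable connectivity proof where the paper offers only a sketch; note that closing the argument from ``every state reaches the canonical cherry state'' to ``every pair of states is connected'' uses the symmetry you established first, so the ordering of your three parts matters.
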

%

\begin{proof}
This can be seen by noting the transition matrix is symmetric. Let $P^{L}$ be the transition matrix for the adjacent-swap chain on $\T_n^L$, then 
\[
P^L(x, y) = \frac{1}{n-2} \cdot \frac{1}{4},
\]
where $y$ is obtained by swapping two randomly chosen elements from two adjacent pairs in $x$, chosen uniformly at random, e.g. consider the following states at pairs $k$ and  $k + 1$:
\begin{align*}
&x: (a, b)^k, (c, d)^{k + 1} \\
&y: (a, c)^k, (b, d)^{k + 1}.
\end{align*}
In a labeled ranked tree shape $T^{L}_{n}$, all pairs are formed by distinct elements, so $a, b, c, d$ are all unique and the only way to transition from $x$ to $y$ is to swap the labels $b$ and $c$, which happens with probability $1/4(n - 2)$ if $c<k$ or $c \in \{\ell_{1},\ldots,\ell_{n}\}$. 

To see that the chain is irreducible, we note that any label can be moved to any pair to the right, one pair (or step) at a time with positive probability and that any label can be moved to a pair with index $k$ to the left (one pair at a time) as long as the label corresponds to a leaf or an internal node smaller than $k$, satisfying constraint (\ref{eqn: constraint}). To see that the chain is aperiodic, we  note that $P^{L}(x,x)>0$, for all $x\in \T^{L}_{n}$. Independent of the current state, we can pick pair $k=n-2$ and attempt to swap any element of the $n-2$ pair with label $n-2$ from the $n-1$ pair with probability $1/2(n-2)$. Since this move violates constraint \eqref{eqn: constraint}, the move is rejected and the chain remains in the current state.
\end{proof}

We note that the transition matrix $P$ of the adjacent-swap chain on unlabeled ranked tree shapes is not symmetric. For example, consider the transition $x$ to $y$ of the type:
\begin{equation} \label{eq:cherr}
(0, a)^k, (0, b)^{k+1} \to (0, 0)^k, (a, b)^{k+1},
\end{equation}
for labels $a, b \neq 0$ (recall the $0$s represent leaf labels). The probability of this transition is $P(x,y)=1/(n-2) \cdot 1/4$, since once pairs $k$ and $k + 1$ are chosen, there is a $1/4$ chance of choosing  label $a$ from pair $k$ and label $0$ from pair $k+1$. However, the reverse transition:
\[
(0, 0)^k, (a, b)^{k+1} \to (0, a)^k, (0, b)^{k+1}
\]
has probability $1/(n-2) \cdot 1/2$. It turns out that the stationary distribution $\pi$ for the chain is  the \textit{Tajima coalescent} distribution \cite{veber} (also known as Yule distribution). For $T \in \T_n$
\[
\pi(T) = \frac{2^{n - c(T) - 1}}{(n-1)!}, 
\]
where $c(T)$ is the number of cherries of $T$, i.e. the number of pairs of the type $(0,0)$ in the $COM_{n-1}(S)$ representation. Indeed, for transitions $x$ to $y$ of the type of \eqref{eq:cherr},  $y$ has one more cherry than $x$ and $\pi(x)P(x,y)=\pi(y)P(y,x)$; for other types of transitions $P(x,y)>0$ that do not affect the number of cherries, $P(x,y)=P(y,x)$, $\pi(x)=\pi(y)$ and the detailed balance equation is satisfied. 

Another way of proving the Tajima coalescent distribution is the stationary distribution on unlabeled ranked tree shapes is to view the chain as a  \textit{lumping} of the chain on $\T^{L}_{n}$. This perspective also gives us an initial comparison of the relaxation times of the chains. The space $\T_n$ of unlabeled ranked-tree shapes can be considered as a set of equivalency classes of the trees $\T_n^L$. That is, for trees $x, y \in \T_n^L$, define the equivalence relation $x \sim y$ if the trees have the same ranked tree shape.  From the $COM_{n-1}(S)$ perspective with $S = \{\ell_1, \dots, \ell_n, 1, 2, \dots, n - 2\}$, two matchings are equivalent if all internal node labels  $1,\ldots,n-2$ occur in the same pairs.

This equivalence relation induces a partition of $\T_n^L$ using equivalence classes. That is, we can write $\T_n^L$ as the disjoint union of sets $\Omega_1, \dots, \Omega_M$, where $M = |\T_n|$, and all trees in $\Omega_i$ have the same ranked-tree shape. 

\begin{lemma} \label{lem: lumping}
For any $x, x'  \in \T_n^L$, equivalence class $\Omega_i$, and $x \sim x'$, the following relation holds:
\[
P^L(x, \Omega_i) := \sum_{y \in \Omega_i} P^L(x, y) = P^L(x', \Omega_i).
\]
\end{lemma}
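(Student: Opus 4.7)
The plan is to reduce the lumpability identity to a combinatorial count that depends only on the ``shape'' of $x$. By the transition probability formula given just before the lemma, $P^L(x, y) = 1/(4(n-2))$ for every pair $x, y$ differing by a single valid swap, and the only other positive transition is the holding probability $P^L(x,x)$ coming from invalid choices (in the labeled case no valid swap can return to $x$, since all labels are distinct). Consequently, $P^L(x, \Omega_i)$ equals $1/(4(n-2))$ times the number of candidate swap triples from $x$ whose outcome lies in $\Omega_i$, plus the number of invalid triples counted only when $x \in \Omega_i$. The lemma thus reduces to showing this count is the same for any two representatives of an equivalence class.

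I would parametrize each candidate move by a triple $(k, s_1, s_2)$, with $k \in \{1,\dots,n-2\}$ selecting the adjacent pairs and $s_1, s_2 \in \{1,2\}$ picking a slot within pair $k$ and pair $k+1$. To each $x \in \T_n^L$ attach its \emph{skeleton} $\sigma(x)$, which records, for every pair position, the precise placement of the internal labels $1,\dots,n-2$ among the two slots and marks the remaining slots as generic leaf slots, without remembering which specific $\ell_j$ sits there. As noted in the paragraph preceding the lemma, $x \sim x'$ exactly when $\sigma(x) = \sigma(x')$, so the classes $\Omega_i$ are in bijection with skeletons.

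The argument then rests on two elementary observations, both verifiable slot by slot. First, the constraint \eqref{eqn: constraint} fails for $(k, s_1, s_2)$ precisely when an internal label $j$ would be moved into a pair of index $\le j$, a condition visible on $\sigma(x)$ alone. Second, on any valid triple the skeleton $\sigma(y)$ of the outcome is a function of $\sigma(x)$ and $(k, s_1, s_2)$: exchanging an internal-label slot with a leaf slot, or two internal-label slots, updates the affected skeleton entries in a way that is blind to specific leaf identities, while exchanging two leaf slots leaves $\sigma$ unchanged. Hence the map $(k, s_1, s_2) \mapsto \sigma(y)$ agrees on $x$ and $x'$ whenever $\sigma(x) = \sigma(x')$, and the number of triples producing the skeleton corresponding to $\Omega_i$ is a function of $\sigma(x)$ alone, yielding $P^L(x, \Omega_i) = P^L(x', \Omega_i)$.

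The step requiring the most care is the leaf-leaf case: such swaps are always valid and preserve the skeleton, so they always contribute to the count for $\Omega_i = [x]$ even though they genuinely change the labeled tree. This contribution must be bundled together with the invalid-triple contributions to $P^L(x, [x])$; since both counts are pure functions of $\sigma(x)$, the bookkeeping closes without issue and the argument goes through. No deeper obstacle is expected beyond being meticulous about these leaf-leaf and self-loop contributions.
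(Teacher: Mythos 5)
Your proposal is correct and follows essentially the same route as the paper's proof: both arguments reduce to the observation that the validity of a proposed swap and the equivalence class of its outcome depend only on the positions of the internal labels $1,\dots,n-2$ (your ``skeleton''), which $x$ and $x'$ share. Your version is somewhat more meticulous than the paper's, in particular in explicitly accounting for the diagonal term $P^L(x,x)$ arising from rejected moves and for the leaf--leaf swaps that stay inside $[x]$, both of which the paper's two-case argument passes over silently.
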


\begin{proof}
If $x\in \Omega_{i}$, then the transition from $x$ to $y\in \Omega_{i}$ is of the type:
\begin{equation}
(a, b)^k, (c, d)^{k+1} \to (c, b)^k, (a, d)^{k+1}.
\label{eq:type}
\end{equation}
where $a, c \in \{\ell_1, \ell_2, \dots, \ell_{n} \}$ correspond to leaf labels. Since $x$ and $x'$ differ only at leaf labels, the transition probability of swapping two leaf labels is the same and $P^{L}(x,\Omega_{i})=P^{L}(x',\Omega_{i})$. If $x \not\in \Omega_{i}$, and therefore $x' \not\in \Omega_{i}$, then the transition from $x$ to $y$ is of the type:
\[
(a, b)^k, (c, d)^{k+1} \to (c, b)^k, (a, d)^{k+1}.
\]
where $a$ and $c$ are not both in $\{\ell_1, \ell_2, \dots, \ell_{n} \}$. Since $x$ and $x'$ differ only at leaf labels, again $P^{L}(x,\Omega_{i})=P^{L}(x',\Omega_{i})$.
%
\end{proof}

In words, Lemma \ref{lem: lumping} says that probability of transitioning to a different ranked-tree shape is independent of the leaf configuration of the current tree. A well-known result (e.g.\ Lemma 2.5 in \cite{LevinPeresWilmer2006}) is that the induced chain on the space of equivalence classes defined by $\tilde{P}([x], [y]) := P(x, [y])$ is a Markov chain; then observe that the induced chain $\tilde{P}$ is equivalent to adjacent-swap Markov chain $P$ on the space $\T_n$. This allows a comparison of the spectral gaps of the two chains. Lemma 12.9 in \citep{LevinPeresWilmer2006} states the eigenvalues of the transition matrix of the lumped chain are eigenvalues of the transition matrix of the full chain, hence we have the following lemma.

\begin{lemma} \label{lem: lumping2}
Let $\gamma, \gamma^L$ be the spectral gaps of the chains $P, P^L$ respectively. Then, $\gamma^L \leq \gamma$.
\end{lemma}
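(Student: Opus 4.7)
The plan is to obtain the inequality as an immediate corollary of the general principle that the spectrum of a lumped chain is contained in the spectrum of the original chain. First I would invoke Lemma \ref{lem: lumping}, which verifies the strong-lumpability condition that $P^L(x,\Omega_i)$ depends on $x$ only through its equivalence class $[x]$. Combined with the observation made just before its statement, this shows that the induced transition kernel $\tilde P([x],[y]):=P^L(x,\Omega_{[y]})$ is a well-defined Markov chain on $\T_n^L/\!\sim$, and that under the canonical identification of equivalence classes with unlabeled ranked tree shapes, $\tilde P$ coincides with the adjacent-swap chain $P$ on $\T_n$.

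Next I would apply Lemma 12.9 of \cite{LevinPeresWilmer2006}, which asserts that every eigenvalue of a lumped chain is also an eigenvalue of the original chain: concretely, if $f$ is a right-eigenvector of $\tilde P$ with eigenvalue $\lambda$, then the pulled-back function $\bar f(x):=f([x])$ on $\T_n^L$ is a right-eigenvector of $P^L$ with the same eigenvalue $\lambda$, and $\bar f$ is non-zero because $f$ is. Applied here, this yields the inclusion of spectra: every eigenvalue of $P$ is an eigenvalue of $P^L$. In particular, the second-largest-in-absolute-value eigenvalue $\lambda_{n,2}$ of $P$ is an eigenvalue of $P^L$, so the analogous quantity $\lambda_{n,2}^L$ for $P^L$ satisfies $|\lambda_{n,2}^L|\ge |\lambda_{n,2}|$. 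The desired inequality then follows at once:
\[
\gamma^L \;=\; 1-|\lambda_{n,2}^L| \;\le\; 1-|\lambda_{n,2}| \;=\; \gamma.
\]

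There is no real obstacle here; the argument is a one-line appeal to Lemmas \ref{lem: lumping} and 12.9 of \cite{LevinPeresWilmer2006}. The only point that requires a brief check is that we are comparing the correct quantities: both chains are irreducible and aperiodic (the former from Lemma \ref{lem: lumping} and the arguments already given for $P^L$, and the latter from a trivial holding probability on $\T_n$ as well), so the eigenvalue $1$ is simple in each spectrum, and the subleading absolute eigenvalue is exactly what controls the spectral gap.
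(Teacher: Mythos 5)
Your argument is correct and is essentially identical to the paper's: the paper also deduces the result from Lemma \ref{lem: lumping} (which gives the lumpability condition making $\tilde P$ a well-defined Markov chain equal to $P$ on $\T_n$) together with Lemma 12.9 of \cite{LevinPeresWilmer2006} on spectra of lumped chains. Your additional remarks on pulling back eigenvectors and on simplicity of the eigenvalue $1$ are just a more explicit spelling-out of the same one-line appeal.
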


%

From Lemma \ref{lem: lumping} we can immediately see that stationary distribution for $P$ is indeed the Tajima coalescent distribution. Suppose $T \in \T_n^U$ corresponds to the equivalence class $\Omega_i \subset \T_n^L$. Then,
\[
\pi(T) = \sum_{y \in \Omega_i} \pi^L(y) = \frac{|\Omega_i|}{|\T_n^L|}.
\]
A uniformly sampled labeled ranked tree shape with the leaf-labels erased gives an unlabeled ranked tree shape according to the Tajima distribution \citep{veber}. This implies that $|\Omega_i|/|\T_n^L|$ is equal to the Tajima distribution for the ranked  tree shape corresponding to $\Omega_i$. 



\section{Lower bound} \label{sec: lowerBound}

To prove the lower bounds in Theorem \ref{thm: main}, we use the variational characterization of the relaxation time:


\begin{equation} \label{eqn: relTime}
\tau_{n} = \sup_{f: \Omega \to \R} \frac{Var_\pi(f)}{\mathcal{E}(f)}.
\end{equation}
We will use the \textit{internal tree length} as a function $\varphi(x): \T_{n} \rightarrow \R$ to find a lower bound for $\tau_n$. Since the internal tree length is independent of the leaf labels, we get the same lower mixing time bound for the adjacent-swap chains on $\T_{n}$ and $\T^{L}_{n}$. For ease of exposition we will focus the following discussion on $\T_{n}$.\\


We will define the internal tree length function on the space $\mathcal{T}_n$ using the correspondence with $COM_{n-1}(S)$ and assuming unit length between consecutive coalescence events. For a label $j \in \{1, \dots, n - 2 \}$, let $I_{x}(j)$ denote the pair index $k$ such that $j \in \textbf{p}_k$. In the example of Figure \ref{fig:fig3}, $I_{x}(2) = 5$. Note that from the constraints (Eq.~\eqref{eqn: constraint}), we have that
\[
I_{x}(j) \in \{j+1, \dots, n - 1 \}.
\]
We now define the internal tree length function on $\mathcal{T}_n$ as follows:
\begin{equation}
\varphi(x) = \sum_{j = 1}^{n-2} (I_{x}(j) - j).
\end{equation}

As an example consider the  ranked unlabeled tree shape with $n = 7$ depicted in Figure \ref{fig:fig3}. Note that the function is minimized for the ``caterpillar tree'': $(0,0)^{1},(0,1)^{2},(0,2)^{3},\ldots,(0,n-2)^{n-1}$, where $I_{x}(j) = j + 1$ for all $j$, giving $\varphi(x) = n - 2$. 

\begin{SCfigure}[3.0][!hb]
\includegraphics[width=0.2\textwidth]{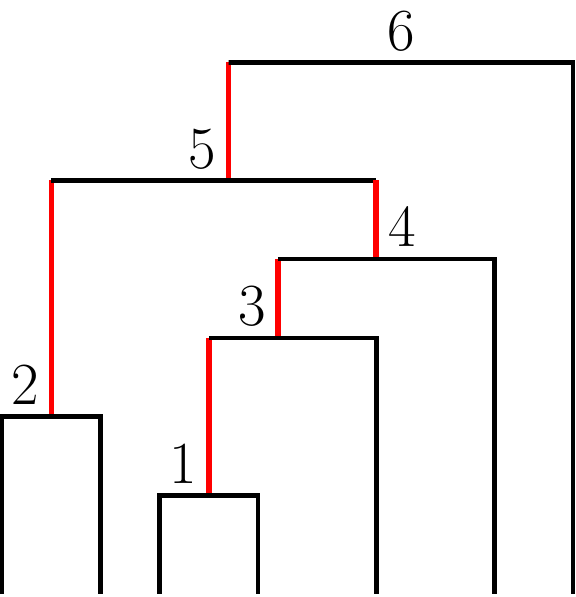}
\caption{\textbf{Example of internal tree length calculation}. A ranked tree shape $T$ with ordered matched pairs: $x=(0,0)^{1},(0,0)^{2},(0,1)^{3},(0,3)^{4},(2,4)^{5},(0,5)^{6}$. The corresponding internal tree length is the sum of the lengths of the red branches: $\varphi(x)=(3-1)+(5-2)+(4-3)+(5-4)+(6-5)=8$.}
\label{fig:fig3}
\end{SCfigure}

This function $\varphi$ is useful in bounding the relaxation time because it is ``local'' with respect to the Markov chain. That is, suppose $P(x, y) > 0$ and $x \neq y$. The change from $x$ to $y$ could have moved an interior node label to the left, in which case $\varphi(y) = \varphi(x) - 1$. If it moved an interior node label to the right, then $\varphi(y) = \varphi(x) + 1$. If two interior node labels were swapped, then $\varphi(y) = \varphi(x)$. Thus, $(\varphi(x) - \varphi(y))^2 \le 1$. The denominator of Equation \eqref{eqn: relTime} can then be upper-bounded by $1/2$.

To find the variance of the internal tree length, we note that  Var$_{\pi}[\varphi(X)]=$Var$_{\pi^{L}}[\varphi(X)]$ since the internal tree length is independent of the leaf labels. We now re-state the standard coalescent of labeled ranked tree shapes as an urn process. 

Consider an urn containing $n$ balls labeled $\ell_{1},\ldots,\ell_{n}$. At step k, draw two balls without replacement from the urn. Let  $l_1, l_2$ be the labels of the balls drawn, then set $\mathbf{p}_k = (l_1, l_2)^{k}$. Add in a new ball with label $k$. Repeat this process for $k = 1, \dots, n - 1$ until only a single ball labeled $(n-1)$ remains in the urn. The resulting sequence of pairs $\mathbf{p} = (\mathbf{p}_1, \dots, \mathbf{p}_{n-1})$ corresponds to a labeled ranked tree shape  $T \in \T^{L}_{n}$ drawn from the standard coalescent, i.e. $\mathbf{p} \sim \pi^{L}$.

We can now simplify the urn process as follows: start with $n$ white balls in the urn. At each step, draw two balls and add back in a single red ball (representing an interior node of the tree). Let $R_0 = 0$ and $R_k$ be the number of red balls in the urn after $k$ coalescence events. Note that after $k$ mergers (coalescence events), there are $n - k$ total balls left and $R_{n - 1} = 1$.  

The simplified urn process is useful because the internal tree length can be computed by counting the number of red balls at each step and adding them all together (Figure \ref{fig:fig3}), i.e., 
\[
\varphi(\mathbf{p}) = \sum_{k = 1}^{n-2} R_k.
\]

The quantities $\{R_k \}_{k = 1}^{n-2}$ are fairly easy to analyze and have been studied before in various contexts. In \cite{janson2011}, the values are used to study the asymptotic behavior of the external tree length of the Kingman coalescent and it is shown that:
\begin{align*}
& \E[R_k] = \frac{k(n - k)}{n-1}, \\
& Cov(R_k \cdot R_l) = \frac{k(k - 1)(n - l)(n - l -1)}{(n - 1)^2 (n - 2)}, \,\,\,\, k \le l.
\end{align*}
Using this, we get
\[
\E_\pi[\varphi(\mathbf{p})] = \sum_{k = 1}^{n-2} \frac{k(n - k)}{n-1} = \frac{1}{6}(n^2 + n - 6),
\]
Then, we calculate
\begin{align}
&\sum_{k = 1}^{n - 2} \E[R_k^2] = \frac{1}{30}(n^3 + 3n^2 + 2n - 30), \label{eqn: l1}\\
&2 \cdot \sum_{k = 1}^{n-3} \sum_{l = k + 1}^{n - 2} \E[R_k R_l] = \frac{1}{180}(n - 3)(5n^3 + 21 n^2 - 14n - 120). \label{eqn: l2}
\end{align}
The second moment $\E_\pi[\varphi(\mathbf{p})^2]$ is the sum of these two lines \eqref{eqn: l1} + \eqref{eqn: l2}, which gives
\[
\text{Var}_\pi(\varphi(\mathbf{p})) = \E_\pi[\varphi(\mathbf{p})^2] - \E_\pi[\varphi(\mathbf{p})]^2 = \frac{1}{90}n (n + 1)(n - 3).
\]

\begin{theorem} \label{thm:rel}
Let $\tau_{n}, \tau_n^L$ be the relaxation time of the adjacent-swap chains on $\T_n$ and $\T^{L}_{n}$, respectively. Then,
\[
\tau^{L}_{n} \ge \tau_n \ge \frac{\text{Var}_\pi(\varphi)}{\mathcal{E}(\varphi)} \ge \frac{2}{90} n (n + 1)(n - 3).
\]
\end{theorem}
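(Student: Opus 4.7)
The proof breaks naturally into the three chained inequalities, and all the analytical work has already been done in the preceding paragraphs; the task is to assemble the pieces cleanly.

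First, the leftmost inequality $\tau_n^L \ge \tau_n$ is immediate from Lemma \ref{lem: lumping2}. Since the chain on $\T_n$ is the lumping of the chain on $\T_n^L$ by the equivalence relation ``same ranked tree shape,'' we have $\gamma^L \le \gamma$, and the relaxation time is the reciprocal of the spectral gap, so $\tau_n^L = 1/\gamma^L \ge 1/\gamma = \tau_n$.

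For the middle inequality, I would simply invoke the variational characterization (Theorem 3 in the preliminaries) applied to the test function $\varphi$. Since
\[
\gamma \;=\; \min_{f} \frac{\mathcal{E}(f)}{\mathrm{Var}_\pi(f)} \;\le\; \frac{\mathcal{E}(\varphi)}{\mathrm{Var}_\pi(\varphi)},
\]
taking reciprocals gives $\tau_n = 1/\gamma \ge \mathrm{Var}_\pi(\varphi)/\mathcal{E}(\varphi)$. This step requires only checking that $\mathrm{Var}_\pi(\varphi) \ne 0$, which is clear from the explicit variance formula already computed.

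The rightmost inequality is the quantitative heart of the argument. The denominator $\mathcal{E}(\varphi)$ must be bounded above. Here I use the ``local'' property of $\varphi$ noted just before the theorem: for every transition $x\to y$ with $P(x,y)>0$, an adjacent swap changes $\varphi$ by at most one, so $[\varphi(x)-\varphi(y)]^2 \le 1$. Plugging this into the Dirichlet form,
\[
\mathcal{E}(\varphi) \;=\; \tfrac12 \sum_{x,y} [\varphi(x)-\varphi(y)]^2 \pi(x)P(x,y) \;\le\; \tfrac12 \sum_{x,y} \pi(x)P(x,y) \;=\; \tfrac12.
\]
For the numerator, I would cite the computation of $\mathrm{Var}_\pi(\varphi) = \frac{1}{90}n(n+1)(n-3)$ already carried out via the urn representation and the identities \eqref{eqn: l1}--\eqref{eqn: l2}. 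Combining the two bounds gives $\mathrm{Var}_\pi(\varphi)/\mathcal{E}(\varphi) \ge \frac{2}{90}n(n+1)(n-3)$, which closes the chain.

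There is no real obstacle; the one subtlety worth stating explicitly is why the variance under $\pi$ on $\T_n$ agrees with the variance computed via the urn/Kingman process (which samples from $\pi^L$ on $\T_n^L$). That is precisely because $\varphi$ depends only on the positions of the internal node labels $1,\dots,n-2$ and not on the leaf labels, so it is constant on equivalence classes and its distribution under the marginal $\pi$ on $\T_n$ matches its distribution under $\pi^L$.
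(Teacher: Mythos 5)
Your proposal is correct and follows essentially the same route as the paper: Lemma \ref{lem: lumping2} for the leftmost inequality, the variational characterization applied to $\varphi$ for the middle one, and the bound $\mathcal{E}(\varphi)\le 1/2$ from the local property of $\varphi$ combined with the urn-process variance computation for the rightmost one. Your explicit remark that $\mathrm{Var}_\pi(\varphi)=\mathrm{Var}_{\pi^L}(\varphi)$ because $\varphi$ ignores leaf labels is the same justification the paper gives.
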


The lower bound on the relaxation time is applicable to labeled and unlabeled ranked tree shapes, however Lemma \ref{lem: lumping2} further allows us to state the first inequality on the left hand side of Theorem \ref{thm:rel}.

\section{Upper bound} \label{sec: coupling}

In this section we prove the upper bounds in Theorem \ref{thm: main} using a coupling argument. The coupling is similar to the one used by Aldous for analyzing the adjacent transpositions chain on $S_n$  \cite{aldous1983random}.

We analyze a lazy version of the chain to make the coupling. That is, at each step, we generate a random coin flip $\theta \sim \text{Bernoulli}(1/2)$. If $\theta = 1$, attempt a move of the chain, and if $\theta = 0$ then make no change.  Let $X_t = (\textbf{p}_1, \dots, \textbf{p}_{n-1})$ and  $Y_t = (\textbf{q}_1, \dots, \textbf{q}_{n-1})$ be the two copies of the chain at time $t$, one started from $x$ and the other from $y$. We will first describe the coupling for the chain on $\T_n$.

\paragraph{Coupling of unlabeled ranked tree shapes.}  We'll define a coupling that jointly matches the internal node labels of the two copies in the order $n-2, n-3, n -4, \dots, 1$. Note that label $n -2$ is already jointly matched in the $n-1$ pairs because it can only occur in the final pair.  Let $N_{t} \in \{1,2,\ldots,n-2\}$ be the maximum label that is not jointly matched, i.e., the maximum element that occurs in different pairs in $X_{t}$ and $Y_{t}$. Let $X_t(a)$ denote the index of the pair in the matching $X_{t}$ that contains $a$, i.e.\ $X_t(a) = i$ if $a \in \textbf{p}_i$ at time $t$. Then, at time $t$, the elements $\geq N_{t}$ match in both $X_{t}$ and $Y_{t}$, i.e. for every $a \geq N_{t}$ $X_{t}(a)=Y_{t}(a)$.\\

For any label $a \in \{1, 2, \dots, n - 2 \}$, the coupling will have two properties:\\

\noindent \textbf{Property 1.} If $a \ge N_t$, then $X_s(a) = Y_s(a)$ for all $s \ge t$.\\

\noindent \textbf{Property 2.} If $a = N_t - 1$ and $X_t(a) < Y_t(a)$, then $X_{t+1}(a) \le Y_{t+1}(a)$ and if $X_t(a) > Y_t(a)$, then $X_{t+1}(a) \ge Y_{t+1}(a)$.  This condition will ensure that $a$ will eventually get matched in the two copies. \\

Define the following quantities:
\begin{itemize}
\item Let $M_{t}$ be the set of all indices $2 \le i \le n -2$ that contain labels $\geq N_{t}$ jointly matched. That is, there is a label $a \geq N_{t}$ such that $X_{t}(a)=Y_{t}(a)=i$.
Note that it is possible to have other matches for labels $< N_t$, but we do not keep track of those matches and breaking those matches does not violate the two properties of the coupling.

\item Let $AM_t = i$ if label $N_t - 1$ is in pairs $\textbf{p}_i$ and $\textbf{q}_{i+1}$, or if it is in pairs $\textbf{p}_{i+1}$ and $\textbf{q}_{i}$. Otherwise, set $AM_t = 0$.
\end{itemize}

At each step, pick an index $1 \le i \le n - 2$ uniformly at random and consider the following cases: 
\begin{enumerate}
\item $i, i+1  \notin M_t$ and $i \neq AM_t$. There are no joint matchings of labels $\geq N_{t}$ in pairs $i$ and $i+1$ and swapping two elements in both copies will not match label $N_{t}-1$. In this case, propose a swap in $X_{t}$ and $Y_{t}$ according to their (lazy) marginal dynamics.
 

\item $i \in M_{t}$ and/or $i+1 \in M_t$, and $i \neq AM_t$. There is at least one joint match of labels $\geq N_{t}$ in pairs $i$ or $i+1$ and swapping two elements in both copies will not create a new joint match of $N_{t}-1$. To preserve Property 1 it is necessary to perform the \textit{same} move (or no move at all) on both chains. Toss a single coin $\theta$ to determine whether a move will be proposed on both copies or not. \\
(a) If $\theta=1$ and the pairs at $i$ and $i + 1$ are of the type:
\begin{align*}
&X: (a, b)^i, (c, d)^{i+1} \\
&Y: (a, e)^i, (f, g)^{i+1},
\end{align*}
With the possibility of $b=e$. Then, if a matched label e.g. $a$ is chosen to be swapped in chain $X$, it must also be chosen for $Y$. Now, label $a<i$ since it is a label that merges at $i$; and $a \geq N_{t}$ since it is a label that was jointly matched before. Then $c,d,f,$ or $g$ cannot be label $i$ since $i$ is already a match. This implies that there are no constraints on $c,d,f,g$ and the  marginal probability of swapping $a$ would be the same in each chain. 

(b) If $\theta=1$ and the pairs at $i$ and $i + 1$ are of the type:
\begin{align*}
&X: (a, b)^i, (c, d)^{i+1} \\
&Y: (e, f)^i, (g, d)^{i+1},
\end{align*}
With the possibility of $d=i$, or 
\begin{align*}
&X: (a, b)^i, (i, d)^{i+1} \\
&Y: (e, f)^i, (i, d)^{i+1},
\end{align*}
Then, if a matched label e.g. $d$ is chosen to be swapped in chain $X$, it must also be swapped in $Y$. 

\item $i,i+1 \notin M_t$ and $i = AM_t$. There are no joint matchings of labels $\geq N_{t}$ in pairs $i$ and  $i+1$, and label $N_{t}-1$ is either in $\textbf{p}_{i}$ and $\textbf{q}_{i+1}$ or  in $\textbf{p}_{i+1}$ and $\textbf{q}_{i}$. In order to preserve property 2,  simply set $\theta_Y = 1 - \theta_X$. This ensures that label $a = N_t - 1$ will only possibly move in one of the chains.

\item $i \in M_t$ and/or $i+1 \in M_{t}$, and $i = AM_t$. There is one joint match in pair $i$ and/or $i+1$, and label $N_{t}-1$ is either in $\textbf{p}_{i}$ and $\textbf{q}_{i+1}$ or in $\textbf{p}_{i+1}$ and $\textbf{q}_{i}$. To preserve Properties 1 and 2 
while keeping the correct marginal transition probabilities for each chain we define the joint transitions as follows. 

(a) Suppose $c = N_t - 1$ and the pairs at indices $i, i + 1$ are of the type:
\begin{align*}
&X: (a, b)^i, (c, d)^{i+1} \\
&Y: (a, c)^i, (f, g)^{i+1}.
\end{align*}
By the same argument of case 2, $c,d, g,f \neq i$. Table \ref{tab:table1} defines the joint proposal probabilities that preserve Properties $1$ and $2$ and correct marginal transition probabilities.


\begin{table}[h]
\caption{\label{tab:table1} Coupling transition probabilities in case 4(a)}
\begin{center}
\begin{tabular}{c|c|r}
Move in $X$ & Move in $Y$ & Probability\\
\hline
No change & No change & $3/8$\\
No change & $c \leftrightarrow f$ & $1/16$ \\
No change & $c \leftrightarrow g$ & $1/16$ \\
$b \leftrightarrow c$ & No change & $1/8$ \\
$b \leftrightarrow d$ & $c \leftrightarrow f$  & $1/16$ \\
$b \leftrightarrow d$ & $c \leftrightarrow g$  & $1/16$ \\
$a \leftrightarrow c$ & $a \leftrightarrow f$  & $1/16$ \\
$a \leftrightarrow c$ & $a \leftrightarrow g$  & $1/16$ \\
$a \leftrightarrow d$ & $a \leftrightarrow f$  & $1/16$ \\
$a \leftrightarrow d$ & $a \leftrightarrow g$  & $1/16$ \\
\end{tabular}
\end{center}
\end{table}

(b) Suppose now that $c=N_{t}-1$ and that pairs at indices $i$ and $i+1$ are of the type:
\begin{align*}
&X: (a, b)^i, (c, i)^{i+1} \\
&Y: (a, c)^i, (f, i)^{i+1}.
\end{align*}
Table \ref{tab:table2} defines the joint proposal probabilities that preserve Properties 1 and 2 with correct marginal transition probabilities. 

\begin{table}[h]
\caption{\label{tab:table2} Coupling transition probabilities in case 4(b)}
\begin{center}
\begin{tabular}{c|c|r}
Move in $X$ & Move in $Y$ & Probability\\
\hline
No change & No change & $5/8$\\
No change & $c \leftrightarrow f$ & $1/8$ \\
$b \leftrightarrow c$ & No change & $1/8$ \\
$a \leftrightarrow c$ & $a \leftrightarrow f$  & $1/8$ \\
\end{tabular}
\end{center}
\end{table}

(c) Suppose now that $c=N_{t}-1$ and that pairs at indices $i$ and $i+1$ are of the type:
\begin{align*}
&X: (a, c)^i, (b, i)^{i+1} \\
&Y: (d, e)^i, (c, i)^{i+1}.
\end{align*}
Table \ref{tab:table3} defines the joint proposal probabilities that preserve Properties 1 and 2 with correct marginal transition probabilities. 
\begin{table}[h]
\caption{\label{tab:table3} Coupling transition probability in case 4(c).}
\begin{center}
\begin{tabular}{c|c|r}
Move in $X$ & Move in $Y$ & Probability\\
\hline
No change & No change & $5/8$\\
No change & $e \leftrightarrow c$ & $1/8$ \\
$b \leftrightarrow c$ & No change & $1/8$ \\
$a \leftrightarrow b$ & $d \leftrightarrow c$  & $1/8$ \\
\end{tabular}
\end{center}
\end{table}

\end{enumerate}

\paragraph{Coupling of labeled ranked tree shapes.} The coupling for $\T_n^L$ can proceed exactly as the coupling for $\T_n$ until every interior-node label is matched, say at time $T$. After that point, the leaf labels can be matched in any order. We extend the set $M_t$ to contain the indices $1 \le i \le n -2$ of any label $a \in \{\ell_{1},\ldots,\ell_{n}\}$ such that $X_t(a) = Y_t(a)$ and $AM_t$ to be the set of indices $i$ such that any label $a\in \{\ell_{1},\ldots,\ell_{n}\}$ is in pair $\textbf{p}_i$ and $\textbf{q}_{i+1}$ or $\textbf{p}_{i+1}$ and $\textbf{q}_i$. The transition probabilities defined above work with these sets $M_t$, $AM_t$ and have the following properties, for $t \ge T$ and label $a$:

\begin{enumerate}
\item \textbf{Property 1} If $X_t(a) = Y_t(a)$, then $X_s(a) = Y_s(a)$ for all $s \ge t$. 
\item \textbf{Property 2} If $X_t(a) < Y_t(a)$ then $X_s(a) \le Y_s(a)$ for all $s \ge t$. If $X_t(a) > Y_t(a)$ then $X_s(a) \ge Y_s(a)$ for all $s \ge t$.
\end{enumerate}

\subsection{Coupling time}

%
For the chain on unlabeled ranked tree shapes $\T_n$, the time to couple is $T = T_{n-3} + T_{n-4} + \dots + T_1$, where $T_a$ is the time it takes to match interior node label $a$, after the labels $a +1, \dots, n - 2$ are already matched. Property 2 is crucial to study $T_{a}$. Suppose that at time $t = T_{n-3} + \dots + T_{a + 1}$ when label $a + 1$ is matched we have $X_t(a) < Y_t(a)$. Let $S_a$ be the time it takes for $Y_t$ to hit the left boundary of it's range, i.e.\  $Y_t(a) = a + 1$. Then necessarily at this time, by Property 2, $X_t(a) = Y_t(a)$.

Note that $X_t(a) \in \{a+1, \dots, n - 2\}$. 
If $a$ is not at the left boundary, the probability of moving $a$ to the left is:
\[
\P(X_{t+1}(a) = X_t(a) - 1 \mid X_t \neq a + 1 ) = \frac{1}{n - 2} \cdot \frac{1}{2} \cdot \frac{1}{2} = \frac{1}{4(n-2)}.
\]
The probability that $a$ moves to the right will depend on the exact configuration and whether or not there is a constraint, e.g.\ in the situation
\begin{align*}
& (a, b)^i, (c, i)^{i+1},
\end{align*}
in which the probability $a$ moves to the right is $1/8(n -3)$. Thus, we can bound
\[
\frac{1}{8(n - 2)} \le \P(X_{t+1}(a) = X_t(a) + 1 \mid X_t \neq n - 2 ) \le \frac{1}{4(n-2)}.
\]


The following is an easy result about the hitting time of a symmetric random walk on a line.
\begin{lemma} \label{lem: hittingTime}
Let $(Z_t)_{t \ge 0}$ be a random walk on the line $\{1, 2, 3, \dots, m\}$ with the transitions
\begin{align*}
&P(i, i+1) = p \,\,\,\,\,\,\,\,\,\,\, i \neq m \\
&P(i, i-1) = p \,\,\,\,\,\,\,\,\,\,\, i \neq 1 \\
&P(i, i) = 1 - 2p \,\,\,\,\,\,\,\,\,\,\, i \neq 1, m \\
&P(m, m) = 1 - p\\
&P(1, 1) = 1
\end{align*}
for some $0 < p \le 1/2$. Suppose $Z_0 = m$ and let $T_m = \inf \{t > 0: Z_t =1 \}$. Then $\E[T_m] = (1/2p)\cdot m(m - 1)$.
\end{lemma}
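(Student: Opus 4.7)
The plan is a standard one-step/harmonic analysis for the function $h(i) := \E_i[T_m]$, where $\E_i$ denotes expectation conditional on $Z_0 = i$. The absorbing endpoint supplies the boundary condition $h(1) = 0$, and the goal is to show $h(m) = m(m-1)/(2p)$.

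First I would write down the one-step equations obtained by conditioning on $Z_1$. For each interior state $2 \le i \le m-1$ the stated transition probabilities give
\[
h(i) = 1 + p\,h(i-1) + (1 - 2p)\,h(i) + p\,h(i+1),
\]
which rearranges to the discrete Poisson equation $h(i+1) - 2h(i) + h(i-1) = -1/p$. At the reflecting endpoint, the analogous one-step equation $h(m) = 1 + (1-p)\,h(m) + p\,h(m-1)$ reduces to the boundary relation $h(m) - h(m-1) = 1/p$.

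Next, I would pass to first differences $d_i := h(i) - h(i-1)$ for $2 \le i \le m$. The interior Poisson equation becomes the scalar recursion $d_{i+1} = d_i - 1/p$, with terminal value $d_m = 1/p$ supplied by the reflecting boundary. Iterating downward yields $d_i = (m - i + 1)/p$ for every $i \in \{2, \dots, m\}$. Summing the telescoping differences starting from $h(1) = 0$ gives
\[
h(m) = \sum_{i=2}^{m} d_i = \frac{1}{p}\sum_{j=1}^{m-1} j = \frac{m(m-1)}{2p},
\]
which is the claim.

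I do not foresee any real obstacle here: this is a routine gambler's-ruin computation, and the one observation worth making is that setting $p = 1/2$ recovers the familiar expected hitting time $m(m-1)$ of the non-lazy simple random walk on $\{1,\dots,m\}$ with absorption at $1$ and reflection at $m$, so the overall factor $1/(2p)$ has the expected interpretation as a time rescaling by the per-step interior-jump probability $2p$. The only mild care needed is in checking that the reflecting-boundary equation at $m$ fits the same difference-equation framework used in the interior, which is exactly what the calculation $p\,h(m) = 1 + p\,h(m-1)$ above verifies.
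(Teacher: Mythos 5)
Your proof is correct and follows essentially the same route as the paper: a first-step (one-step conditioning) analysis of the expected hitting time with the boundary condition $h(1)=0$ and the reflecting relation at $m$. The only difference is cosmetic --- the paper solves the recursion at $p=1/2$ by exhibiting the closed form $a_x = x(2m-x+1)-2m$ and then rescales time by the factor $1/(2p)$, whereas you solve the general-$p$ recursion directly by telescoping the first differences $d_i$; both yield $m(m-1)/(2p)$.
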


\begin{proof}
We can prove the result for $p = 1/2$, then scale. Let $a_x$ be the expected first time of hitting $1$ if $Z_0 = x$. Note that the following identities are satisfied:
\begin{align*}
&a_1 = 0 \\
&a_x = 1 + \frac{1}{2}(a_{x + 1} + a_{x - 1}) \,\,\,\,\,\, x \in \{2, 3, \dots, m - 1 \} \\
&a_m = 1 + \frac{1}{2} a_{m-1} + \frac{1}{2} a_m.
\end{align*}
This is a second-order recursion, with solution $a_x = x(2m - x + 1) - 2m$. So this means $a_m = m(m - 1)$. When $p < 1/2$ the probability of moving is $2p < 1$, so the expected time to hit $1$ started from $m$ is $(1/2p)m(m - 1)$.
\end{proof}


Let $Z_t$ be a random walk on the line $a+1, \dots, n - 2$ with $p = 1/4(n-2)$, defined as in Lemma \ref{lem: hittingTime}. We can couple $Z_t$ with $Y_t(a)$ so that $Y_t(a) \le Z_t$ almost surely for all $t \ge 0$, because $Z_t$ is moving to the right with probability $\ge$ $Y_t(a)$. In conclusion, 
\[
\E[T_a] \le 2(n-2) (n - 2 - a)^2
\]

and thus the total coupling time is

\[
\E[\tau_{couple}] \le \sum_{a = 1}^{n - 3} \E[T_a] \le \sum_{a = 1}^{n - 3} 2(n-2) (n - 2 - a)^2 =   \frac{1}{3}(n-1)(n - 2)(n-3) (2n-5) \leq \frac{2}{3} n^4.
\]
This together with Theorem \ref{thm: couplingThm} gives the upper bound in Theorem \ref{thm: main} for $\T_n$:
\[
t_n \le 4 \cdot \E[\tau_{couple}] \leq \frac{8}{3}n^4.
\]

\paragraph{Leaf-Labeled Trees}
The coupling time for leaf-labeled trees can be written $\tau_{couple}^L = T + T^{leaves}$, where $T$ is the time from the previous section for the interior labels to match. For the time $T^{leaves}$ for the leaves to match, note that time is saved because the leaves are allowed to match in any order. Moreover, leaf labels can be moved without constraints.  In analogy with the result from Example 4.10 in \cite{aldous1983random} for adjacent transpositions on $S_n$, this would take time of order  $n^3 \log(n)$. Therefore, $\E[\tau_{couple}^{L}]\leq  \frac{2}{3} n^4 + C_{4}n^{3}\log(n)$.

\section{Discussion} \label{sec: discussion}

The representation of ranked tree shapes as ordered matchings can be used to define many other Markov chains in analogy to well-studied chains on $S_n$. For example, in addition to the adjacent-swap, another natural chain would be a random-swap Markov chain: pick two pairs uniformly at random, within each pair pick a label, and swap the two elements if it is allowed. The internal tree length function defined in Section \ref{sec: lowerBound} gives a lower bound of $n$ for this chain, because a single move could change the function by at most $n$, and thus the Dirichlet form can be bounded by $n^2$. A naive coupling argument would give an upper bound of order $n^3$. 
The focus of this paper was on the adjacent-swap chain because it is a local-move chain which could be more useful for Metropolis algorithms in applications. 

The upper bound on the mixing time relies on a coupling that matches one label at a time in a specific order giving a sub-optimal upper bound. We conjecture that the mixing time of the adjacent-swap chain on unlabeled ranked tree shapes is of the order of $n^{3}\log(n)$ as it is in the case of adjacent transpositions on $S_{n}$. Figure \ref{fig:tvdist} shows the total variation distance between the adjacent-swap chain on unlabeled ranked tree shapes and the Tajima stationary distribution for trees with $n<10$ leaves. Due to the large size of the state space, this calculation could not be extended for larger trees in order to inform about the presence of a cutoff phenomena.

\begin{figure}
\includegraphics[width=0.5\textwidth]{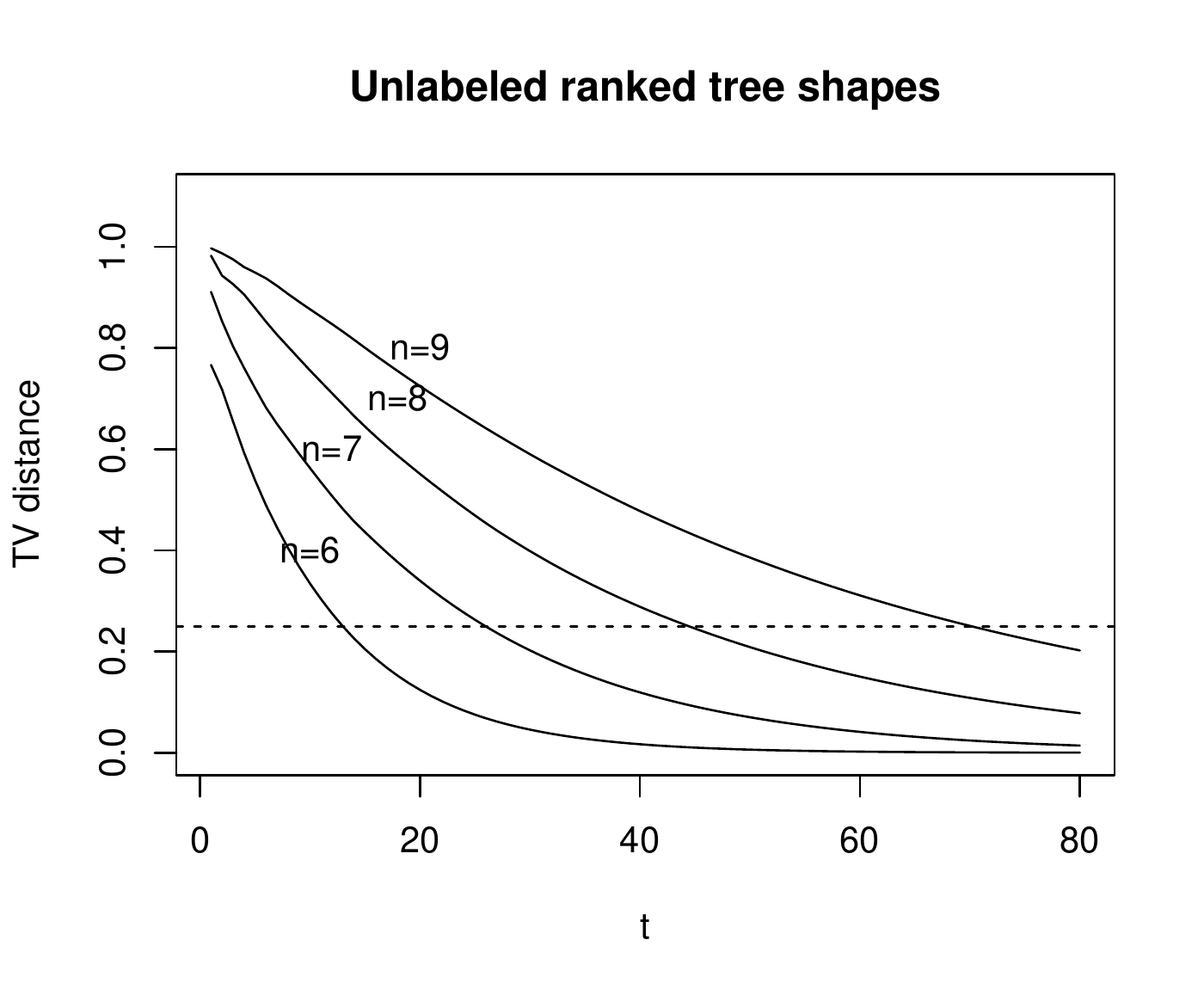}
\centering
\caption{\textbf{Total variation distance for adjacent-swap chain on unlabeled ranked tree shapes}. Each curve depicts the total variation distance between the probability of the chain at time $t$ and the Tajima stationary distribution for trees with $n=6,7,8$ and $9$ leaves.}
\label{fig:tvdist}
\end{figure}

We also note another connection between trees and permutations: Unlabeled ranked tree shapes are in bijection with alternating permutations \citep{donaghey1975alternating,richard1999enumerative} and hence some results concerning Markov chains on alternating permutations could be used to analyze the mixing of Markov chains on unlabeled ranked trees and vice versa. However, we are not aware of mixing time results of Markov chains on the space alternating permutations \citep{diaconis2013random}. 

Finally, in this work we analyzed Markov chains on trees whose stationary distributions are coalescent distributions. These coalescent distributions are used as \textit{prior} distributions over the set of ancestral relationships of samples of DNA \citep{Palacios2019}. In practice, these Markov chains are used to approximate the posterior distribution, and hence, the state space is more restricted and subject of future exploration.

\paragraph{Acknowledgments.} MS is supported by a National Defense Science \& Engineering Graduate fellowship. JAP is supported by  National Institutes of Health Grant R01-GM-131404 and the Alfred P. Sloan Foundation.
\bibliographystyle{abbrv}
\bibliography{citations}

\end{document}